\newcommand{\uhr}{\mathbin{\lceil}}
\newcommand{\dd}{\mathrm{d}}
\renewcommand{\Tilde}{\widetilde}
\renewcommand{\Bar}{\overline}
\newcommand{\RR}{\mathbb{R}}
\newcommand{\CC}{\mathbb{C}}
\newcommand{\NN}{\mathbb{N}}
\newcommand{\mx}{\mathrm{max}}
\newtheorem{theorem}{Theorem}
\newtheorem{lemma}[theorem]{Lemma}
\theoremstyle{definition}
\newtheorem{remark}[theorem]{Remark}
\DeclareMathOperator{\spec}{spec}
\DeclareMathOperator{\dom}{dom}
\begin{document}

\title[]{On the asymptotics of the principal eigenvalue\\
for a Robin problem with a large parameter\\ in planar domains}

\author{Konstantin Pankrashkin}

\address{Laboratoire de math\'ematiques -- UMR 8628, Universit\'e Paris-Sud, B\^atiment 425, 91405 Orsay Cedex, France}

\thanks{Written for the proceedings
of the conference ``Mathematical Challenge of Quantum Transport in Nanosystems'' (Pierre Duclos Workshop), Saint-Petersburg, Russia, March 13--15, 2013}

\email{konstantin.pankrashkin@math.u-psud.fr}
\urladdr{http://www.math.u-psud.fr/~pankrash/}

\begin{abstract}
Let $\Omega\subset \RR^2$ be a domain having a compact boundary $\Sigma$ which is
Lipschitz and piecewise $C^4$ smooth,
and let $\nu$ denote the inward unit normal vector on~$\Sigma$.
We study the  principal eigenvalue $E(\beta)$ of the Laplacian in $\Omega$ with the Robin boundary conditions
$\partial f/\partial\nu +\beta f=0$ on $\Sigma$, where $\beta$ is a positive number. 
Assuming that $\Sigma$ has no convex corners we show the estimate $E(\beta)=-\beta^2- \gamma_\mx\beta + O\big(\beta^\frac{2}{3}\big)$ as $\beta\to+\infty$,
where $\gamma_\mx$ is the maximal curvature of the boundary.

\medskip

\noindent {\sc Keywords:} eigenvalue, Laplacian, Robin boundary condition, curvature, asymptotics

\medskip

\noindent {\sc PACS:} 41.20.Cv, 02.30.Jr, 02.30.Tb
\end{abstract}

\maketitle

\section{Introduction}

Let $\Omega\subset \RR^2$ be an open connected set having a compact Lipschitz piecewise smooth boundary $\Sigma$.
For $\beta>0$ consider the operator  $H_\beta$ which is the Laplacian $f\mapsto-\Delta f$
with the Robin boundary conditions,
\[
\dfrac{\partial f}{\partial \nu}+\beta f=0 \text{  on }\Sigma,
\]
where $\nu$ is the inward unit normal vector. More precisely, $H_\beta$ 
is the self-adjoint operator in $L^2(\Omega)$ associated with the sesquilinear form
\begin{equation}
     \label{eq-fhb}
h_\beta (f,g)=\iint_\Omega \Bar{\nabla f}\nabla g\, \dd x -\beta \int_\Sigma \Bar f\, g\,\dd\sigma,
\quad \dom h_\beta=H^1(\Omega);
\end{equation}
here $\sigma$ denotes the one-dimensional Hausdorff measure on $\Sigma$.
The operator $H_\beta$ is semibounded from below. If $\Omega$ is bounded, then
$H_\beta$ has a compact resolvent, and we denote by $E_j(\beta)$, $j\in\NN$,
its eigenvalues taken according to their multiplicities
and enumerated in the non-decreasing order.
If $\Omega$ is unbounded, then the essential spectrum of $H_\beta$
coincides with $[0,+\infty)$, and the discrete spectrum consists of finitely many 
eigenvalues which we denote again by $E_j(\beta)$, $j\in\{1,\dots,N_\beta\}$,
and enumerate them in the non-decreasing order taking into account the multiplicities.
In the both cases the principal eigenvalue $E(\beta):=E_1(\beta)$ may be defined through the Rayleigh quotients
\[
E(\beta)=
\inf_{0\ne f\in \dom h_\beta}\dfrac{h_\beta(f,f)}{\|f\|^2_{L^2(\Omega)}}.
\]
It is easy to check that $E(\beta)<0$: for bounded $\Omega$ one can test on $f=1$,
and for unbounded $\Omega$ one may use $f(x)=\exp\big(-|x|^\alpha/2\big)$ with small $\alpha>0$.
 
The study of the principal eigenvalue arises in several applications:
the work \cite{bol} discusses the stochastic meaning of the Robin eigenvalues,
the paper~\cite{lacey} shows the role of the eigenvalue problem appears in the study
of a long-time dynamics related to some reaction-diffusion process, and
a discussion of an interplay between the eigenvalues and the estimate of the critical temperature in a problem
of superconductivity may be found in \cite{gs1}.

In the present note we are interested in the asymptotic behavior of $E(\beta)$ for large values of $\beta$.
For bounded $\Omega$, this question was already addressed in numerous papers.
It was conjectured and partially proved in \cite{lacey} that one has the asymptotics
\begin{equation}
        \label{eq-com}
E(\beta)=-C_\Omega \beta^2+o(\beta^2) \text{ as } \beta\to+\infty
\end{equation}
for some constant $C_\Omega>0$. It seems that the paper \cite{luzhu}
contains the first rigorous proof of the above equality for the case  of a $ C^1$ smooth $\Sigma$,
and in that case one has $C_\Omega=1$, as predicted in~\cite{lacey}. Under the
same assumption, it was shown in \cite{dan1} that the same asymptotics
$E_j(\beta)=-\beta^2 +o(\beta^2)$, $\beta\to+\infty$,
holds for any fixed $j\in\NN$. The paper \cite{lp} proved the asymptotics \eqref{eq-com} for
domains whose boundary is $C^\infty$ smooth
with a possible exception of finitely many corners.
If the corner opening angles are $\alpha_j\in(0,\pi)\mathop{\cup}(\pi,2\pi)$, $j=1,\dots, m$,
and $\theta:=\min \alpha_j/2$, then
$C_\Omega=(\sin \theta)^{-2}$ if $\theta <\pi/2$, otherwise $C_\Omega=1$.
We remark that the paper \cite{lp} formally deals with bounded domains, but
the proofs  can be easily adapted to unbounded domains with compact boundaries.
It should pointed out that domains with cusps need a specific consideration, and the results
are different~\cite{kov2,lp}. Various generalizations
of the above results and some related questions concerning the spectral theory
of the Robin Laplacians were discussed e.g. in the papers~\cite{colgar,dan2,frg,gs2,kov2,KL}.
The aim of the present note is to refine the asymptotics \eqref{eq-com} for a class of
two-dimensional domains. More precisely, we calculate the next term in the asymptotic
expansion for piecewise $C^4$ smooth domains whose boundary has no convex corners, i.e. we assume that
either the boundary is smooth or that all corner opening angles are larger than $\pi$;
due to the above cited result of \cite{lp} we have $C_\Omega=1$ in the both cases.

Let us formulate the assumptions and the result more carefully.
Let $\Sigma_k$, $k=1,\dots,n$, be non-intersecting $C^4$ smooth connected components of the boundary $\Sigma$
such that $\Sigma=\bigcup_{k=1}^n\Bar\Sigma_k$. Denote by $\ell_k$ the length of $\Sigma_k$ and consider
a parametrization of the closure $\Bar\Sigma_k$ by the arc length, i.e.
let $[0,\ell_k]\ni s\mapsto \Gamma_k(s)\equiv\big(\Gamma_{k,1}(s),\Gamma_{k,2}(s)\big)\in \Bar\Sigma_k$ be a bijection
with $|\Gamma'_k|=1$ and such that $\Gamma_k\in C^4\big([0,\ell_k]\big)$, and
we assume that the orientation of each $\Gamma_k$ is chosen in such a way
that $\nu_k(s):=\big(-\Gamma'_{k,2}(s),\Gamma'_{k,1}(s)\big)$ is the inward unit normal vector at the point $\Gamma_k(s)$ of the boundary.
If two components $\Sigma_j$, $\Sigma_k$ meet at some point $P:=\Gamma_j(\ell_j)=\Gamma_k(0)$,
then two options are allowed: either $\Bar{\Sigma_j\cup\Sigma_k}$ is $C^4$ smooth near $P$
or the corner opening angle at $P$ measured inside $\Omega$ belongs to $(\pi,2\pi)$.

Denote by $\gamma_k(s)$ the signed curvature of the boundary
at the point $\Gamma_k(s)$ and let $\gamma_\mx$ denote its global maximum: 
\[
\gamma_k(s):=\Gamma'_{k,1}(s)\Gamma''_{k,2}(s)-\Gamma''_{k,1}(s)\Gamma'_{k,2}(s),\quad
\gamma_\mx:=\max_{k\in\{1,\dots,n\}} \max_{s\in[0,\ell_k]} \gamma_k(s);
\]
note that the decomposition of the boundary $\Sigma$ into the pieces $\Sigma_k$ is non-unique, but
the value $\gamma_\mx$ is uniquely determined. 
Our result is as follows:
\begin{theorem}\label{thm1}
Under the preceding assumptions there holds
\[
E(\beta)=-\beta^2-\gamma_\mx \beta +O\big(\beta^\frac{2}{3}\big) \text{ as } \beta\to+\infty.
\]
\end{theorem}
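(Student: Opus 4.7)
My plan is to apply the min-max characterization of $E(\beta)$ with matching upper and lower bounds, both exploiting that the principal eigenfunction concentrates in a layer of width $\sim 1/\beta$ adjacent to $\Sigma$ and behaves, to leading order, like $\sqrt{2\beta}\,e^{-\beta t}\varphi(s)$, where $(s,t)$ are tubular coordinates with $t$ the distance to $\Sigma$. The normal factor is the decaying solution of the one-dimensional Robin problem $-u''=\lambda u$, $u'(0)=\beta u(0)$, giving the leading $-\beta^2$, while the tangential profile $\varphi$ sees an effective operator $-\partial_s^2-\gamma(s)\beta$ whose ground state is approximately $-\gamma_\mx\beta$. On each $\Sigma_k$ I introduce tubular coordinates $\Phi_k(s,t)=\Gamma_k(s)+t\nu_k(s)$; the hypothesis that no corner is convex guarantees that, for some $\delta_0>0$, each $\Phi_k$ is a $C^3$-diffeomorphism of $[0,\ell_k]\times[0,\delta_0]$ onto its image and that these images are pairwise disjoint. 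The Euclidean metric pulls back to $(1-t\gamma_k(s))^2 ds^2 + dt^2$ and the form \eqref{eq-fhb} on each tube reads
\[
\int_0^{\ell_k}\!\!\int_0^{\delta_0}\!\Big(|\partial_t u|^2 + \tfrac{|\partial_s u|^2}{(1-t\gamma_k)^2}\Big)(1-t\gamma_k)\,dt\,ds - \beta\int_0^{\ell_k}\!|u(s,0)|^2 ds.
\]

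\textbf{Upper bound.} Fix $k_0$ and $s_0\in(0,\ell_{k_0})$ with $\gamma_{k_0}(s_0)=\gamma_\mx$, and use a trial function $u(s,t)=\chi(t)\varphi_\rho(s-s_0)e^{-\beta t}$ supported in the tube over $\Sigma_{k_0}$, where $\chi\in C_c^\infty([0,\delta_0))$ with $\chi(0)=1$, and $\varphi_\rho(\sigma)=\rho^{-1/2}\varphi(\sigma/\rho)$ for a fixed $\varphi\in C_c^\infty(\RR)$. Since $\chi\equiv 1$ near $0$, the $t$-integrals can be expanded by Laplace's method (the factor $e^{-2\beta t}$ forces concentration at $t=0$); combining this with a Taylor expansion of $\gamma_{k_0}$ at the maximum, where $\gamma'_{k_0}(s_0)=0$ and $\gamma''_{k_0}(s_0)\le 0$, one finds
\[
\frac{h_\beta(u,u)}{\|u\|_{L^2(\Omega)}^2}=-\beta^2-\gamma_\mx\beta+O(\rho^{-2})+O(\rho^2\beta),
\]
the two remainders coming respectively from the tangential kinetic term and from $\gamma_{k_0}(s)-\gamma_\mx=O((s-s_0)^2)$. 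Choosing $\rho=\beta^{-1/4}$ yields the upper bound with error $O(\beta^{1/2})$, well inside $O(\beta^{2/3})$.

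\textbf{Lower bound.} Set $\Omega_\delta:=\{x\in\Omega:\operatorname{dist}(x,\Sigma)<\delta\}$ with $\delta\le\delta_0$ a fixed small constant. Neumann bracketing at $\partial\Omega_\delta\cap\Omega$ minorizes $h_\beta$ by the direct sum of its restrictions to $H^1(\Omega_\delta)$ and $H^1(\Omega\setminus\overline{\Omega_\delta})$; the complementary piece yields a nonnegative Neumann Laplacian and is discarded. On each tube of $\Omega_\delta$ introduce a partition of unity $\{\chi_j^2\}$ in $s$ with cells of width $\rho$. The IMS identity
\[
h_\beta(u,u)=\sum_j h_\beta(\chi_j u,\chi_j u)-\sum_j\|\chi_j' u\|^2
\]
incurs a localization error $O(\rho^{-2})\|u\|^2$. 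On each cell, replace $\gamma_k(s)$ by $\gamma_j^*:=\max_{\text{cell}}\gamma_k\le\gamma_\mx$ at cost $O(\rho\beta)\|u\|^2$, drop the nonnegative tangential kinetic term, and reduce to a 1D Robin form in $t$ on $[0,\delta]$ whose lowest eigenvalue equals $-\beta^2-\gamma_j^*\beta+O(e^{-c\beta})$. Maximizing $\gamma_j^*\le\gamma_\mx$ and summing cells gives
\[
E(\beta)\ge-\beta^2-\gamma_\mx\beta-C\rho^{-2}-C\rho\beta,
\]
and balancing at $\rho=\beta^{-1/3}$ delivers the error $O(\beta^{2/3})$.

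\textbf{Main obstacle.} The most delicate step is the cellwise reduction to an effective one-dimensional Robin problem: one must separate the quadratic form cleanly into normal and tangential parts despite the non-diagonal Jacobian $(1-t\gamma_k(s))$ and extract precisely the curvature correction $-\gamma_j^*\beta$ from the interplay of the bulk and boundary measures (conveniently achieved by the substitution $f=(1-t\gamma_k)^{-1/2}g$, which removes the Jacobian and produces only a bounded potential). A secondary geometric point is to verify that the residual wedge regions at the concave corners and the bulk of $\Omega$ outside the tubes contribute only lower-order terms; here the non-convex-corner hypothesis is indispensable, as it ensures global injectivity of the tubular maps and excludes overlaps between adjacent tubes.
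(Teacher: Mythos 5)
Your strategy is in the same spirit as the paper's: tubular coordinates, the unitary substitution $f\mapsto(1-t\gamma)^{-1/2}f$ that flattens the Jacobian, Dirichlet--Neumann bracketing at the inner boundary of the tube, reduction to a one-dimensional Robin spectral problem in the normal variable, and balancing a cell scale against $\beta$. The concrete mechanism differs somewhat: you handle the upper bound with an explicit Gaussian-type trial function concentrated at a curvature maximum, whereas the paper obtains both bounds from a single cellwise Dirichlet--Neumann bracketing in the tangential variable, replacing the curvature by its cell-extrema and separating variables explicitly. Your lower bound uses an IMS partition of unity rather than Neumann bracketing, which is a cosmetic variation.

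There is, however, a genuine gap in your upper bound. You fix $s_0\in(0,\ell_{k_0})$ with $\gamma_{k_0}(s_0)=\gamma_\mx$ and then invoke $\gamma_{k_0}'(s_0)=0$, $\gamma_{k_0}''(s_0)\le 0$ to get $\gamma_{k_0}(s)-\gamma_\mx=O((s-s_0)^2)$. But the theorem allows a piecewise-$C^4$ boundary with reflex corners, and $\gamma_\mx$ may be attained \emph{only at an endpoint} of one of the smooth arcs $\Sigma_k$, where there is no reason for $\gamma_k'$ to vanish. In that case the Taylor bound degrades to $\gamma_{k_0}(s)-\gamma_\mx=O(|s-s_0|)$, the trial-function remainder becomes $O(\rho^{-2})+O(\rho\beta)$, and balancing at $\rho=\beta^{-1/3}$ gives $O(\beta^{2/3})$, not $O(\beta^{1/2})$. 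This is precisely the distinction the paper records in its closing Remark: $O(\sqrt{\beta})$ holds only under the extra hypothesis that either $\Sigma$ is $C^4$ or the curvature does not attain its maximum at a corner. So your upper bound argument is correct after replacing the quadratic Taylor step by a Lipschitz estimate, but the claimed $O(\beta^{1/2})$ is too strong in the generality of the theorem. Conversely, the $O(\rho\beta)$ you charge in the lower bound for replacing $\gamma_k$ by its cell-maximum is actually cost-free (the replacement only decreases the boundary term), so your lower-bound error is really $O(\rho^{-2})=O(1)$ for fixed $\rho$; the stated balancing is not wrong, just unnecessary. Once these two points are corrected, the proof gives exactly the theorem's $O(\beta^{2/3})$, with the bottleneck now visibly in the upper bound as in the paper.
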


We believe that it is hard to improve the asymptotics without any additional
information on the set at which the curvature attains its maximal value.
For example, one may expect that the case of a curvature having isolated maxima and the case of a piecewise
constant curvature should give different resolutions of the remainder,
and we hope to progress in this direction in subsequent works.

At the first sight, the Robin eigenvalue problem may look rather similar to the eigenvalue problem
for $\delta$-potentials supported by curves, see e.g.~\cite{EY,EP,lot}.
This first impression is wrong, and the result of Theorem~\ref{thm1} concerning the secondary asymptotic term
is very different from the one obtained in the papers~\cite{EP,EY} for strong $\delta$-potentials;
nevertheless, a part of the machinery of \cite{EY} plays an important role in our considerations.
On the other hand, the asymptotic behavior of the principal Robin eigenvalue
shows some analogy with the lowest eigenvalue of the Neumann magnetic
laplacian studied in the theory of superconductivity~\cite{BN,FH,HM}.

\section{Dirichlet-Neumann bracketing on thin strips}\label{sec2}

In this section we introduce and study an auxiliary eigenvalue problem, and the result obtained will be used
in the next section to prove theorem~\ref{thm1}.

Let $\ell>0$ and let $\Gamma:[0,\ell]\to \RR^4$, $s\mapsto\Gamma(s)=\big(\Gamma_1(s),\Gamma_2(s)\big)\in\RR^2$,
be an injective $C^4$ map such that $\big|\Gamma'(s)\big|=1$ for all $s\in(0,\ell)$.
Denote
\begin{gather*}
S:=\Gamma\big((0,\ell)\big), \quad
\kappa(s):=\Gamma'_1(s)\Gamma''_2(s)-\Gamma''_1(s)\Gamma'_2(s),\quad
\kappa_\mx:=\max_{s\in[0,\ell]}\kappa(s),\\
K:=\max_{s\in[0,\ell]}\big|\kappa(s)\big|+
\max_{s\in[0,\ell]}\big|\kappa'(s)\big|+
\max_{s\in[0,\ell]}\big|\kappa''(s)\big|.
\end{gather*}
Due to $\kappa\in C^2\big([0,\ell]\big)$ the above quantity $K$ is finite.

For $a>0$ consider the map
\[
\Phi_a:(0,\ell)\times\RR\to \RR^2,\quad
\Phi_a(s,u)=\begin{pmatrix}
\Gamma_1(s)-u\Gamma'_2(s)\\
\Gamma_2(s)+u\Gamma'_1(s)
\end{pmatrix}.
\]
As shown in \cite[Lemma 2.1]{EY}, for any $a\in(0,a_0)$, $a_0:=(2K)^{-1}$,
the map $\Phi_a$ defines a diffeomorphism between the domains $\square_a:=(0,\ell)\times (0,a)$
and $\Omega_a:=\Phi_a(\square_a)$. In what follows we always assume
that $a\in(0,a_0)$ and we will work with the usual Sobolev space $H^1(\Omega_a)$
and its part
$\Tilde H^1_0(\Omega_a):=\big\{
f\in H^1(\Omega_a): \, f\uhr_{\partial \Omega_a\setminus\Bar S}=0
\big\}$;
here the symbol $\uhr$ means the trace of the function
on the indicated part of the boundary.

Introduce two sesquilinear forms in $L^2(\Omega_a)$. The first one $h^{N,a}_\beta$,
is defined on $\dom h^{N,a}_\beta:=H^1(\Omega_a)$ by the expression
\[
h^{N,a}_\beta(f,g)=\iint_{\Omega_a} \Bar{\nabla f} \nabla g \,\dd x-\beta\int_S \Bar f g\, \dd\sigma,
\] 
and the second one, $h^{D,a}_\beta$, it its restriction to 
$\dom h^{D,a}_\beta:=\Tilde H^1_0(\Omega_a)$. Both forms are densely defined, symmetric, closed and semibounded from below,
and we denote
\begin{equation}
      \label{eq-eig1}
E_{N/D}(\beta,a)=\inf_{0\ne f\in \dom h^{N/D,a}_\beta}\dfrac{h^{N/D,a}_\beta(f,f)}{\|f\|^2_{L^2(\Omega_a)}}.
\end{equation}
We are going to show the following result:
\begin{lemma}\label{prop1}
There exists $a_1>0$ such that for any $a\in(0,a_1)$
one has the estimate
$E_{N/D}(\beta,a)=-\beta^2-\kappa_\mx \beta +O(\beta^{\frac{2}{3}})$
as $\beta\to+\infty$.
\end{lemma}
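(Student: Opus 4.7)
The plan is to transform the eigenvalue problem to curvilinear coordinates where the variables $s$ and $u$ approximately decouple, then apply Dirichlet--Neumann bracketing in $s$ on a length scale tuned to the $\beta^{2/3}$ remainder. Pull both forms back to $\square_a$ via $\Phi_a$: the Jacobian equals $1-u\kappa(s)$, and
\[
h_\beta^{N/D,a}[f] = \int_0^\ell\!\!\int_0^a \Bigl[\tfrac{|\partial_s f|^2}{(1-u\kappa)^2}+|\partial_u f|^2\Bigr](1-u\kappa)\,du\,ds - \beta\int_0^\ell |f(s,0)|^2\,ds,
\]
with $L^2$-weight $(1-u\kappa)\,du\,ds$. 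The substitution $\psi=(1-u\kappa)^{1/2}f$ removes the weight, and integration by parts in $u$ of the cross-term produced by $|\partial_u f|^2$ effectively shifts the Robin parameter at $u=0$ to $\beta+\kappa(s)/2$; what remains is an effective potential $V(s,u)$ uniformly bounded for $a<a_0$, together with a boundary term at $u=a$ which is $O(e^{-c\beta a})$ against the exponentially concentrated ground state.

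\textbf{Step 2 (bracketing in $s$ and cell-wise eigenvalue).} Partition $[0,\ell]$ into intervals $I_j$ of length $\delta\approx\beta^{-1/3}$. For a lower bound on $E_N$ insert Neumann conditions at the interior lines $\{s_j\}\times(0,a)$ and replace $\kappa(s)$ on $I_j$ by $\kappa_j^-:=\min_{I_j}\kappa$; for an upper bound on $E_D$ insert Dirichlet conditions and use $\kappa_j^+:=\max_{I_j}\kappa$. Since $\kappa\in C^1$, the replacement differs from $\kappa(s)$ by $O(\delta)=O(\beta^{-1/3})$ on $I_j$, contributing $O(\delta\beta)=O(\beta^{2/3})$ to the eigenvalue through the shifted Robin term. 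With $\kappa$ frozen the form separates; the $u$-direction gives the one-dimensional Robin problem $-\chi''=\mu\chi$ on $(0,a)$ with $\chi'(0)=-(\beta+\kappa_j^\pm/2)\chi(0)$ and Neumann (resp.\ Dirichlet) at $u=a$, whose principal eigenvalue equals $-(\beta+\kappa_j^\pm/2)^2$ up to $O(e^{-c\beta a})$, while the $s$-direction contributes the first eigenvalue on $I_j$ of $-\partial_s^2$ with the chosen boundary condition, equal to $0$ or $O(\delta^{-2})=O(\beta^{2/3})$. Minimizing over $j$ and choosing a cell containing (a near-)maximizer of $\kappa$ yields the matching estimates $-\beta^2-\kappa_\mx\beta+O(\beta^{2/3})$ for both $E_N$ and $E_D$.

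\textbf{Main obstacle.} The variables $s$ and $u$ are genuinely coupled through $\kappa(s)$: after conjugation the form also contains $u$-dependent weights in front of $|\partial_s\psi|^2$ and extra terms involving $\kappa'(s)$ generated by $\partial_s[(1-u\kappa)^{1/2}]$. The technical heart of the argument is to show that such cross-terms, evaluated against trial functions localized at scale $\beta^{-1}$ in $u$ and $\delta=\beta^{-1/3}$ in $s$, contribute at most $O(\beta^{2/3})$, and that the potential $V$ together with the $O(1)$ corrections from the Robin shift are absorbed into the same remainder. The choice $\delta\sim\beta^{-1/3}$ is sharp for this strategy: it minimizes the sum of the $O(\delta\beta)$ cost of freezing $\kappa$ and the $O(\delta^{-2})$ tangential confinement cost, dictating the exponent $2/3$.
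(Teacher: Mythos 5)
Your proposal follows the same strategy as the paper: pass to the curvilinear coordinates $(s,u)$ via $\Phi_a$, conjugate by the Jacobian weight $\sqrt{1-u\kappa}$ to shift the Robin parameter to $\beta+\kappa(s)/2$, partition $(0,\ell)$ into cells of length $\delta\sim\beta^{-1/3}$, freeze the curvature per cell, separate variables, and feed the result into a one-dimensional Robin eigenvalue estimate. However, two points need correction.

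\textbf{(1) The $\kappa^{\pm}$ assignment is reversed.} To bound $E_N$ from below you must construct a form $t^{-}$ with $t^{-}(f,f)\le q^{N,a}_\beta(f,f)$ on $\dom q^{N,a}_\beta$. Since the conjugated boundary term is $-\int(\beta+\tfrac12\kappa(s))|f(s,0)|^2\,\dd s$, \emph{enlarging} $\kappa$ makes the form smaller; thus the lower-bound form must carry $\kappa_j^{+}:=\max_{I_j}\kappa$, and symmetrically the Dirichlet upper-bound form must carry $\kappa_j^{-}$. You have it the other way around, which breaks the form inequality $t^{-}\le q^{N}$ and hence the chain $\inf t^{-}\le E_N\le E_D\le\inf t^{+}$. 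This is not cosmetic: the 1D eigenvalue is approximately $-(\beta+\kappa_j^{\pm}/2)^2$, so the sign of the error in the frozen coefficient determines which side of the bracket you land on.

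\textbf{(2) The ``Main obstacle'' is identified but left open, and the proposed resolution is off-target.} You suggest handling the $u$-dependent weight $(1-u\kappa)^{-2}$ in front of $|\partial_s\psi|^2$ and the $\kappa'$ cross-terms by testing against trial functions localized at scale $\beta^{-1}\times\beta^{-1/3}$. A trial-function argument only yields an upper bound; it cannot give the lower bound on $E_N$, which is the more delicate half. The paper's resolution is purely operatorial and makes no reference to the shape of eigenfunctions: for $a<a_0=(2K)^{-1}$ one has the uniform bounds $4/9\le(1-u\kappa)^{-2}\le 4$, so the $s$-kinetic weight can be replaced by the constants $4/9$ and $4$ in the lower and upper bracket forms respectively; the potential $V(s,u)$ is uniformly bounded by a constant $v$; the $\kappa'(0),\kappa'(\ell)$ boundary terms at the lateral edges are absorbed using the one-dimensional Sobolev inequality $\int_0^a(|f(0,u)|^2+|f(\ell,u)|^2)\,\dd u\le C(\|\partial_s f\|^2+\|f\|^2)$, which costs a further $O(a)$ adjustment of the constants provided $a<(10KC)^{-1}$; and the $u=a$ boundary term is majorized by $2K\int_0^\ell|f(s,a)|^2\,\dd s$ and folded into a Robin condition at $u=a$ for the lower-bound operator. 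Only after all coefficients are made cell-wise constant does the exact tensor decomposition $T^{\pm,M,a}_{\beta,j}=Q^\pm_M\otimes1+1\otimes L^{\pm,j}_{\beta,a}$ hold, and Lemmas~\ref{lem2}--\ref{lem3} supply the quantitative 1D estimate $-(\beta')^2(1+O(e^{-c\beta' a}))$. Your sketch gives the right heuristic for the $\beta^{2/3}$ exponent, but as written it does not deliver the lower bound.
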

The rest of this section is devoted to the proof of lemma~\ref{prop1}.
We first introduce a suitable decomposition of $\Omega_a$ and then use
an asymptotic separation of variables.

Define $U_a:L^2(\Omega_a)\to L^2(\square_a)$ by $\big(U_a f\big)(s,u)=\sqrt{1-u\kappa(s)}f\big(\Phi_a(s,u)\big)$.
Clearly, $U_a$ is a unitary operator, and one has $U_a\big(H^1(\Omega_a)\big)=H^1(\square_a)$ and
\[
U_a\big(\Tilde H^1_0(\Omega_a)\big)=\Tilde H^1_0(\square_a):=
\big\{
f\in H^1(\square_a):\, f(0,\cdot)=f(\ell,\cdot)=0 \text{ and } f(\cdot, a)=0
\big\},
\]
where the restrictions should be again understood as the traces.
Using the integration by parts one may easily check that
for any $f,g\in H^1(\Omega_a)$ one has
$h^{N,a}_\beta(f,g)=q^{N,a}_\beta(U_a f,U_a g)$,
where the form $q^{N,a}_\beta$ is defined on the domain $\dom q^{N,a}_\beta:=H^1(\square_a)$
by the expression
\begin{equation}
    \label{eq-qqn}
\begin{aligned}
q^{N,a}_\beta(f,g)=&\iint_{\square_a} \dfrac{1}{\big(1-u\kappa(s)\big)^2}\, \Bar{\dfrac{\partial f}{\partial s}}\dfrac{\partial g}{\partial s}\, \dd s\,\dd u
+\iint_{\square_a} \Bar{\dfrac{\partial f}{\partial u}}\dfrac{\partial g}{\partial u}\,\dd s\,\dd u\\
&-\iint_{\square_a} V(s,u)\Bar f \,g\,\dd s\,\dd u -\beta\int_0^{\ell} \Bar{f(s,0)}g(s,0)\, \dd s\\
&-\dfrac{1}{2}\int_0^{\ell} \kappa(s)\, \Bar{f(s,0)}g(s,0)\,\dd s
 +\dfrac{1}{2}\int_0^{\ell} \dfrac{\kappa(s)}{1-a\kappa(s)}\, \Bar{f(s,a)}g(s,a)\,\dd s\\
&+\dfrac{1}{2}\kappa'(\ell)\int_0^a \dfrac{u}{\big(1-u\kappa(\ell)\big)^3}\, \Bar{f(\ell,u)}g(\ell, u)\, \dd u\\
&-\dfrac{1}{2}\kappa'(0)\int_0^a \dfrac{u}{\big(1-u\kappa(0)\big)^3}\, \Bar{f(0,u)}g(0, u)\, \dd u
\end{aligned}
\end{equation}
with
\[
V(s,u):=\dfrac{u\kappa''(s)}{2\big(1-u\kappa(s)\big)^3}+\dfrac{5u^2\kappa'(s)^2}{4\big(1-u\kappa(s)\big)^4}
+\dfrac{\kappa(s)^2}{4\big(1-u\kappa(s)\big)^2}.
\]
Similarly, for any $f,g\in \Tilde H^1_0(\Omega_a)$ one has 
$h^{D,a}_\beta(f,g)=q^{D,a}_\beta(U_a f,U_a g)$,
where $q^{D,a}_\beta$ is the restriction of $q^{N,a}_\beta$ to
the domain $\dom q^{D,a}_\beta:=\Tilde H^1_0(\square_a)$;
note that for $f,g \in \dom q^{D,a}_\beta$
the three last terms on the right-hand side of \eqref{eq-qqn} vanish.
Using the unitarity of $U_a$ we may rewrite the equalities \eqref{eq-eig1} in the form
\begin{equation}
    \label{eq-eig2}
E_{N/D}(\beta,a)=\inf_{0\ne f\in \dom q^{N/D,a}_\beta}\dfrac{q^{N/D,a}_\beta(f,f)}{\|f\|^2_{L^2(\square_a)}}.
\end{equation}
We would like to reduce the estimation of these quantities to the  study
of the eigenvalues of certain one-dimensional operators.

Using the one-dimensional Sobolev inequality on $(0,\ell)$ we see that one can find a constant $C>0$ independent of $a$ such that for all $f\in H^1(\square_a)$ one has
\[
\int_0^a \big|f(0, u)\big|^2\, \dd u+\int_0^a \big|f(\ell, u)\big|^2\, \dd u
\le C \Big(\iint_{\square_a}\Big|\dfrac{\partial f}{\partial s}\Big|^2 \dd s\, \dd u+\iint_{\square_a}|f|^2 \dd s\, \dd u\Big)
\]
One can also find a constant $v>0$ such that
$\big|V(s,u)\big|\le v$ for all $(s,u)\in\square_a$ and all $a\in (0,a_0)$.
Furthermore, again for $(s,u)\in\square_a$ and any $a\in (0,a_0)$, we have
\[
\Big|\dfrac{\kappa(s)}{1-a\kappa(s)}\Big|\le 2K,
\quad
\dfrac{2}{3}\le \dfrac{1}{1-u\kappa(s)}\le 2.
\]
For any $M\in\NN$ we denote
\begin{gather*}
\delta:=\dfrac{\ell}{M},\quad
I_M^j:=( j\delta-\delta, j\delta),\quad
\square_{a,M}^j:=I_M^j\times(0,a),\\
\kappa^-_{M,j}:=\inf_{s\in I^j_M}\kappa(s), \quad
\kappa^+_{M,j}:=\sup_{s\in I^j_M}\kappa(s), \quad j=1,\dots,M,
\end{gather*}
and introduce functions $\kappa^\pm_M:(0,\ell)\to \RR$ as follows:
$\kappa^\pm_M(s):=\kappa^\pm_{M,j}$ if  $s\in I^j_M$, and $\kappa^\pm_M(j\delta):=0$ for $j=1,\dots,M-1$.
In addition, assume that $0<a<(10KC)^{-1}$. Now introduce two new sesquilinear forms
which will be used to obtain a two-side estimate for $E_{N/D}(\beta,a)$.
The first one, $t^{-,M,a}_\beta$, is defined by
\begin{align*}
\dom t^{-,M,a}_\beta&=H^1\big(\bigcup_{j=1}^M \square_{a,M}^j\big)\simeq
\bigoplus_{j=1}^M H^1\big(\square_{a,M}^j\big),\\
t^{-,M,a}_\beta(f,g)&=\Big(\dfrac{4}{9}-4aKC)
\iint_{\square_a} \Bar{\dfrac{\partial f}{\partial s}}\dfrac{\partial g}{\partial s} \,\dd s\,\dd u
+
\iint_{\square_a} \Bar{\dfrac{\partial f}{\partial u}}\dfrac{\partial g}{\partial u} \,\dd s\,\dd u\\
&\ - (v+4aKC)\iint_{\square_a} \Bar{f}g\, \dd s\,\dd u
-\int_0^{\ell}\Big(\beta+\dfrac{\kappa^+_M(s)}{2}\Big)\, \Bar{f(s,0)}g(s,0)\,\dd s\\
&\ -K\int_0^{\ell} \Bar{f(s,a)}g(s,a)\,\dd s.
\end{align*}
The second one, $t^{+,M,a}_\beta$, is defined on the domain
$\dom t^{+,M,a}_\beta=\bigoplus_{j=1}^M \Tilde H^1_0(\square_{a,M}^j)$,
\[
\Tilde H^1_0(\square_{a,M}^j):=
\big\{
f\in H^1(\square_{a,M}^j):\, f(j\delta-\delta,\cdot)=f(j\delta,\cdot)=0 \text{ and } f(\cdot, a)=0
\big\},
\]
through
\begin{multline*}
t^{+,M,a}_\beta(f,g)=4\iint_{\square_a} \Bar{\dfrac{\partial f}{\partial s}}\dfrac{\partial g}{\partial s} \,\dd s\,\dd u
+
\iint_{\square_a} \Bar{\dfrac{\partial f}{\partial u}}\dfrac{\partial g}{\partial u} \,\dd s\,\dd u\\
+ v\iint_{\square_a} \Bar{f}g\, \dd s\,\dd u -\int_0^{\ell}\Big(\beta+\dfrac{\kappa^-_M(s)}{2}\Big)\, \Bar{f(s,0)}g(s,0)\,\dd s.
\end{multline*}
One has clearly the inclusions $\dom t^{+,M,a}_\beta\subset \dom q^{D,a}_\beta\subset \dom q^{N,a}_\beta \subset \dom t^{-,M,a}_\beta$
and the inequalities
\begin{align*}
t^{-,M,a}_\beta(f,f)&\le q^{N,a}_\beta(f,f), & f&\in \dom q^{N,a}_\beta,\\
q^{N,a}_\beta(f,f)&=q^{D,a}_\beta(f,f), & f&\in \dom q^{D,a}_\beta,\\
q^{D,a}_\beta(f,f)&\le t^{+,M,a}_\beta(f,f),&  f&\in \dom t^{+,M,a}_\beta,
\end{align*}
which justify the estimates
\begin{equation}
    \label{eq-eig4}
E^-_M(\beta,a)\le E_N(\beta,a)\le E_D(\beta,a)\le E^+_M(\beta,a),
\end{equation}
where we denote
\[
E^\pm_M(\beta,a):=\inf_{0\ne f\in \dom t^{\pm,M,a}_\beta}\dfrac{t^{\pm,M,a}_\beta(f,f)}{\|f\|^2_{L^2(\square_a)}}
\]
Now we are going to estimate $E^\pm_M(\beta,a)$ using the separation of variables.
Note that the forms $t^{\pm,M,a}_\beta$ are densely defined, semibounded from below and closed in $L^2(\square_a)$,
therefore, they define some self-adjoint operators $T^{\pm,M,a}_\beta$ in $L^2(\square_a)$,
and $E^\pm_M(\beta,a)=\inf\,\spec T^{\pm,M,a}_\beta$.
On the other hand, due to the fact that the domains $\square_{a,M}^j$ are disjoint
and isometric to each other,
we can identify $T^{\pm,M,a}_\beta\simeq\bigoplus_{j=1}^M T^{\pm,M,a}_{\beta,j}$, 
where $T^{\pm,M,a}_{\beta,j}$ are self-adjoint operators acting in $L^2(\square_{\delta,a})$,
$\square_{\delta,a}:=(0,\delta)\times(0,a)$,
and associated respectively with the sesqulinear forms $t^{\pm,M,a}_{\beta,j}$,
\begin{align*}
t^{-,M,a}_{\beta,j}(f,g)&=\Big(\dfrac{4}{9}-4aKC)
\int_0^\delta\int_0^a \Bar{\dfrac{\partial f}{\partial s}}\dfrac{\partial g}{\partial s} \dd u\,\dd s
+
\int_0^\delta\int_0^a \Bar{\dfrac{\partial f}{\partial u}}\dfrac{\partial g}{\partial u} \dd u\,\dd s\\
&\ - (v+4aKC)\int_0^\delta\int_0^a \Bar{f}g\, \dd u\,\dd s
-\Big(\beta+\dfrac{\kappa^+_{M,j}}{2}\Big)\int_0^{\delta}\, \Bar{f(s,0)}g(s,0)\,\dd s\\
&\ -K\int_0^{\delta} \Bar{f(s,a)}g(s,a)\,\dd s,
\quad
\dom t^{-,M,a}_{\beta,j}=H^1(\square_{\delta,a}),\\
t^{+,M,a}_{\beta,j}(f,g)&=4\int_0^\delta\int_0^a \Bar{\dfrac{\partial f}{\partial s}}\dfrac{\partial g}{\partial s} \dd u\,\dd s
+
\int_0^\delta\int_0^a \Bar{\dfrac{\partial f}{\partial u}}\dfrac{\partial g}{\partial u} \dd u\,\dd s\\
&\ + v\int_0^\delta\int_0^a \Bar{f}g\, \dd u\,\dd s -\Big(\beta+\dfrac{\kappa^-_{M,j}}{2}\Big)\,\int_0^{\delta} \Bar{f(s,0)}g(s,0)\,\dd s,\\
&\dom t^{+,M,a}_{\beta,j}=\big\{
f\in H^1(\square_{\delta,a}):\, f(0,\cdot)=f(\delta,\cdot)=0 \text{ and } f(\cdot, a)=0
\big\}
\end{align*}
It is a routine to check that $T^{\pm,M,a}_{\beta,j}=Q^\pm_M\otimes 1 + 1\otimes L^{\pm,j}_{\beta,a}$,
where $Q^\pm_M$ are the operators acting in $L^2(0,\delta)$ as follows:
\begin{align*}
Q^-_M f&=-\big(\dfrac{4}{9}-4aKC\big)f''-(v+4aKC)f,\\
\dom Q^-_M&=\Big\{
f\in H^2(0,\delta): f'(0)=f'(\delta)=0
\Big\},\\
Q^+_M f&=-4f''+vf,\\
\dom Q^-_M&=\Big\{
f\in H^2(0,\delta): f(0)=f(\delta)=0
\Big\},
\end{align*}
and $L^{\pm,j}_{\beta,a}$ are the self-adjoint operators in $L^2(0,a)$ both acting as $L^{\pm,j}_{\beta,a} f=-f''$
on the domains
\begin{align*}
\dom L^{-,j}_{\beta,a}&=\Big\{
f\in H^2(0,a):\, f'(0)+\Big(\beta+\dfrac{\kappa^+_{M,j}}{2}\Big)f(0)=0,
\,
f'(a)-K f(a)=0
\Big\},\\
\dom L^{+,j}_{\beta,a}&=\Big\{
f\in H^2(0,a):\, f'(0)+\Big(\beta+\dfrac{\kappa^-_{M,j}}{2}\Big)f(0)=0,
\,
f(a)=0
\Big\}.
\end{align*}
The spectra of $Q^\pm_M$ can be calculated explicitly; in particular, one has
\[
\inf\,\spec Q^-_M=-v-4aKC, \quad
\inf\, \spec Q^+_M=\dfrac{4\pi^2}{\delta^2}+v\equiv\dfrac{4\pi^2 M^2}{\ell^2}+v.
\]
Therefore, denoting $E^{\pm,j}(\beta,a):=\inf\,\spec L^{\pm,j}_{\beta,a}$, we arrive at
\begin{equation}
 \label{eq-eig5}
 \begin{aligned}
E^-_M(\beta,a)&=\min_j \big(\inf\,\spec T^{-,M,a}_{\beta,j}\big)=-v-4aKC + \min_j E^{-,j}(\beta,a), \\
E^+_M(\beta,a)&=\min_j \big(\inf\,\spec T^{+,M,a}_{\beta,j}\big)=\dfrac{4\pi^2 M^2}{\ell^2}+v + \min_j E^{+,j}(\beta,a)
\end{aligned}
\end{equation}

To study the lowest eigenvalues of $L^{\pm,j}_{\beta,a}$ we prove two
auxiliary estimates.
 
\begin{lemma}\label{lem2}
For $a,\beta,\gamma>0$, let $\Lambda_{a,\beta,\gamma}$
denote the self-adjoint operator in $L^2(0,a)$
acting as $f\mapsto -f''$ on the functions $f\in H^2(0,a)$ satisfying the boundary conditions
$f'(0)+\beta f(0)=f'(a)-\gamma f(a)=0$,
and let $E(a,\beta,\gamma)$ be its lowest eigenvalue.
Let $\beta>2\gamma$ and $\beta a>1$, then $
\beta^2< -E(a,\beta,\gamma)<\beta^2+123\beta^2 e^{-2\beta a}$.
\end{lemma}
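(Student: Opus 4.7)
The plan is to analyze the eigenvalue problem by separation of variables and extract the required inequalities from the resulting transcendental equation. A negative eigenvalue $E=-k^2$ with $k>0$ admits an eigenfunction of the form $f(x)=Ae^{kx}+Be^{-kx}$; imposing the two Robin conditions $f'(0)+\beta f(0)=0$ and $f'(a)-\gamma f(a)=0$ and eliminating the scaling yields the eigenvalue equation
\[
e^{-2ka}=\frac{(k-\beta)(k-\gamma)}{(k+\beta)(k+\gamma)}.
\]
The lemma then amounts to showing that the ground state corresponds to a root $k>\beta$ with an explicit upper bound, using the hypotheses $\beta>2\gamma$ and $\beta a>1$.

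For the lower inequality $\beta^2<-E$, I would use the variational principle with the trial function $\varphi(x)=e^{-\beta x}$, which satisfies the Robin condition at $0$ exactly. A direct computation of the Rayleigh quotient $\bigl(\|\varphi'\|^2-\beta|\varphi(0)|^2-\gamma|\varphi(a)|^2\bigr)/\|\varphi\|^2$ produces the value $-\beta^2\bigl(1+(1+2\gamma/\beta)e^{-2\beta a}\bigr)/(1-e^{-2\beta a})$, which is strictly less than $-\beta^2$. By min-max one gets $E<-\beta^2<0$, which in turn identifies the ground state with the largest root $k>\beta$ of the transcendental equation.

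For the upper inequality, I set $u:=k-\beta>0$ and rewrite the equation as
\[
e^{-2\beta a}e^{-2ua}=\frac{u(u+\beta-\gamma)}{(u+2\beta)(u+\beta+\gamma)}.
\]
The first step is a bootstrap showing $u<\beta$: if $u\geq\beta$, i.e.\ $k\geq 2\beta$, then $\gamma<\beta/2\leq k/4$, and the crude estimates $k-\beta\geq k/2$, $k-\gamma\geq 3k/4$, $k+\beta\leq 3k/2$, $k+\gamma\leq 5k/4$ bound the right-hand side of the original eigenvalue equation from below by $1/5$; this forces $ka\leq(\ln 5)/2<1$, contradicting $ka\geq 2\beta a>2$. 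Once $u<\beta$ is known, the hypothesis $\beta>2\gamma$ gives $u+\beta-\gamma>\beta/2$, $u+2\beta<3\beta$, $u+\beta+\gamma<5\beta/2$, so the right-hand side of the rewritten equation is bounded below by $u/(15\beta)$, leading to $u\leq 15\beta e^{-2\beta a}$. Squaring $k=\beta+u$ and using $\beta a>1$ to dominate $e^{-4\beta a}$ by $e^{-2}e^{-2\beta a}$ (so that $225\,e^{-4\beta a}<31\,e^{-2\beta a}$) yields
\[
k^2\leq\beta^2+30\beta^2 e^{-2\beta a}+225\beta^2 e^{-4\beta a}<\beta^2+123\beta^2 e^{-2\beta a},
\]
as required. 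The approach is routine once the eigenvalue equation is in hand; the only slightly delicate point is the bootstrap needed to reach the regime in which the linear lower bound $u/(15\beta)$ applies, and I foresee no serious conceptual obstacle in tuning the constants to land within the stated bound of $123$.
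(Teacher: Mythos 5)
Your proof is correct and reaches the stated bound. Both arguments reduce to the same transcendental equation $(k+\beta)(k+\gamma)e^{-ka}=(k-\beta)(k-\gamma)e^{ka}$ for $k=\sqrt{-E}$, but the treatments diverge afterward. For $-E>\beta^2$, the paper observes that on $(\beta,\infty)$ the function $g(k)=(k+\beta)/(k-\beta)$ decreases strictly to $1$ while $h(k)=\frac{k-\gamma}{k+\gamma}e^{2ka}$ increases strictly to $+\infty$, so they cross exactly once, yielding a unique root $k>\beta$; you instead insert the trial function $e^{-\beta x}$ into the Rayleigh quotient, which gives $E<-\beta^2$ without any monotonicity analysis and shows directly that the ground-state root must lie above $\beta$. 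For the upper bound $-E<\beta^2+123\beta^2 e^{-2\beta a}$, the paper again exploits monotonicity, writing $g(k)=h(k)>h(\beta^{+})\ge e^{2\beta a}/3$ and solving for $k$; your two-step bootstrap (first ruling out $k\ge 2\beta$ via the crude bound forcing $ka<1$, then linearizing in $u=k-\beta$ to get $u<15\beta e^{-2\beta a}$, and finally expanding $(\beta+u)^2$) is more elementary and in fact lands on the tighter constant $61$ before relaxing to $123$. The only slip is cosmetic: you write $u\le 15\beta e^{-2\beta a}$ where your chain of inequalities is in fact strict, which is what the strict statement of the lemma requires.
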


\begin{proof}
Let $k>0$. Clearly, $E=-k^2$ is an eigenvalue of $\Lambda_{a,\beta,\gamma}$
iff one can find  $(C_1,C_2)\in\CC^2\setminus\big\{(0,0)\big\}$ such that the function
$f:x\mapsto C_1e^{kx}+C_2 e^{-kx}$ belongs to the domain of  $\Lambda_{a,\beta,\gamma}$.
The boundary conditions give
\begin{align*}
0=f'(0)+\beta f(0)&=(\beta+k)C_1 + (\beta-k)C_2,\\
0=f'(a)-\gamma f(a)&=(k-\gamma)e^{ka} C_1 - (k+\gamma)e^{-ka}C_2,
\end{align*}
and one has a non-zero solution iff the determinant of this system vanishes, i.e.
iff $k$ satisfies the equation $(k+\beta)(k+\gamma)e^{-ka}=(k-\beta)(k-\gamma)e^{ka}$.
Let us look for solutions $k\in (\beta,+\infty)$. One may rewrite the preceding equation
as
\begin{equation}
    \label{eq-gh}
g(k)=h(k), \quad g(k)=\dfrac{k+\beta}{k-\beta}, \quad h(k)=\dfrac{k-\gamma}{k+\gamma} e^{2ka}.
\end{equation}
Both functions $g$ and $h$ are continuous. It is readily seen that the function $g$ is strictly decreasing
on $(\beta,+\infty)$ with $g(\beta+)=+\infty$ and $g(+\infty)$=1. On the other hand, for $\beta>2\gamma$
the function $h$ is strictly increasing in $(\beta,+\infty)$ 
being the product of two strictly increasing positive
functions, and we have
$h(\beta+)=e^{2\beta a}(\beta-\gamma)/(\beta+\gamma) <+\infty$
and $h(+\infty)=+\infty$. These properties of $g$ and $h$ show that
there exists a unique solution $k=k(a,\beta,\gamma)\in (\beta,+\infty)$ of \eqref{eq-gh}
and that $E(a,\beta,\gamma)=-k(a,\beta,\gamma)^2$.

To obtain the required estimate we use again the monotonicity of $h$ on $(\beta,+\infty)$
and the inequality $\beta>2\gamma$. We have
\[
\dfrac{k+\beta}{k-\beta}=g(k)=h(k)> h(\beta+)=\dfrac{\beta-\gamma}{\beta+\gamma} e^{2\beta a}\ge \dfrac{e^{2\beta a}}{3},
\]
which gives $(1-3e^{-2\beta a})k<(1+3e^{-2\beta a})\beta$. The assumption $\beta a>1$ gives the inequality
$3e^{-2\beta a}<1/2$, and we arrive at
\[
k<\dfrac{1+3e^{-2\beta a}}{1-3e^{-2\beta a}}\,\beta<
(1+3e^{-2\beta a})(1+15e^{-2\beta a})\beta<(1+41e^{-2\beta a})\beta
\]
and $k^2<(1+41e^{-2\beta a})^2\beta^2< (1+123 e^{-2\beta a})\beta^2$.
Together with the inclusion $k \in(\beta,+\infty)$ this gives the result.
\end{proof}

\begin{lemma}\label{lem3}
For $a,\beta>0$, let $\Pi_{a,\beta}$
denote the self-adjoint operator in $L^2(0,a)$
acting as $f\mapsto -f''$ on the functions
$f\in H^2(0,a)$ satisfying the boundary conditions $f'(0)+\beta f(0)=f(a)=0$,
and let $E(a,\beta)$ be its lowest eigenvalue. Assume that $\beta a>4/3$, then
$\beta^2-4\beta^2 e^{-\beta a}<-E(a,\beta)<\beta^2$.
\end{lemma}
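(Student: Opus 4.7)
The plan is to adapt the argument of Lemma \ref{lem2} to the present Robin--Dirichlet case. I search for negative eigenvalues $E=-k^2$ with $k>0$ by writing the general solution of $-f''=Ef$ as $f(x)=C_1e^{kx}+C_2e^{-kx}$. Imposing $f'(0)+\beta f(0)=0$ and $f(a)=0$ yields the homogeneous linear system
\[
(\beta+k)C_1+(\beta-k)C_2=0,\qquad e^{ka}C_1+e^{-ka}C_2=0,
\]
whose determinant vanishes iff
\[
g(k):=\dfrac{\beta+k}{\beta-k}=e^{2ka}=:h(k).
\]

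For the upper bound, since $h(k)>0$ forces $g(k)>0$, any real solution must lie in $(0,\beta)$; this gives automatically $-E=k^2<\beta^2$. Existence of such a root follows from continuity: on $(0,\beta)$ the function $F:=\ln g-\ln h$ satisfies $F(0+)=0$, $F'(0+)=2/\beta-2a<0$ under the hypothesis $\beta a>4/3>1$, and $F(k)\to+\infty$ as $k\to\beta^-$. A glance at $F'(k)=2\beta/(\beta^2-k^2)-2a$ shows $F$ is first decreasing then increasing on $(0,\beta)$, so the zero in $(0,\beta)$ is unique and corresponds to the principal (negative) eigenvalue.

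For the lower bound, parametrise $k=\beta(1-\epsilon)$ with $\epsilon\in(0,1)$ and rewrite the transcendental equation as
\[
\epsilon=\dfrac{2}{1+e^{2\beta a(1-\epsilon)}}<2\,e^{-2\beta a(1-\epsilon)}.
\]
The bootstrap I need is $1-\epsilon>1/2$. Evaluating at $k=\beta/2$ gives $g(\beta/2)=3$ and $h(\beta/2)=e^{\beta a}$; since the hypothesis $\beta a>4/3$ is slightly stronger than $\beta a>\ln 3$, one has $h>g$ at that point, so the unique root must lie above $\beta/2$, i.e.\ $\epsilon<1/2$. Substituting this back yields $\epsilon<2e^{-\beta a}$, and therefore
\[
-E(a,\beta)=k^2=\beta^2(1-\epsilon)^2>\beta^2(1-2\epsilon)>\beta^2-4\beta^2e^{-\beta a},
\]
which is the desired estimate.

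The only delicate point is this bootstrap step: the estimate $\epsilon<2e^{-2\beta a(1-\epsilon)}$ is only useful once one knows $\epsilon<1/2$, and the numerical threshold $\beta a>4/3$ in the hypothesis is tuned precisely so that $4/3>\ln 3$ and the crude comparison at $k=\beta/2$ goes through. Everything else is one-variable calculus.
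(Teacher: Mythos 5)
Your proof is correct and follows essentially the same route as the paper: same transcendental equation $(\beta+k)/(\beta-k)=e^{2ka}$, same observation that the eigenvalue equation $F:=\log\frac{\beta+k}{\beta-k}-2ka=0$ has $F(0)=0$, $F$ first decreasing then increasing on $(0,\beta)$, $F(\beta^-)=+\infty$, hence a unique root, and the same bootstrap to $k>\beta/2$ before substituting back. The only small variation is in how $k>\beta/2$ is obtained: the paper computes the minimizer $k_0=\sqrt{\beta^2-\beta/a}$ of $F$ explicitly and observes that $\beta a>4/3$ is exactly equivalent to $k_0>\beta/2$, whereas you evaluate $F(\beta/2)=\log 3-\beta a<0$ directly (which only needs $\beta a>\log 3$). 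So the threshold $4/3$ is not ``tuned'' to $\log 3$ as you speculate; it is the exact threshold for the paper's criterion $k_0>\beta/2$, and your sign check happens to work under the same hypothesis.
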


\begin{proof}
Let $k>0$. Proceeding as in the proof of lemma~\ref{lem2} we see that $E=-k^2$ is an eigenvalue of $\Pi_{a,\beta}$
iff $k$ satisfies the equation $(\beta+k)e^{-ka}=(\beta-k)e^{ka}$. As the left-hand side is strictly positive,
the right-hand side must be positive too, which means that all solutions $k$ belong to $(0,\beta)$.
Let us rewrite the equation in the form $g(k)=0$ with $g(k):=\log(\beta+k)-\log(\beta-k) -2ka$.
One has $g(0)=0$, the function $g$ is strictly decreasing in $(0,k_0)$ and strictly increasing in $(k_0,\beta)$,
with $k_0:=\sqrt{\beta^2-\beta/a}$. Moreover, $g(\beta-)=+\infty$. Therefore,
the equation $g(k)=0$ has a unique solution in $(k_0,\beta)$. It follows from the assumption
$\beta a>4/3$ that $k_0>\beta/2$, and we can represent $k=\beta-s$
with some $s\in(0,\beta/2)$. Using again the condition $g(k)=0$
we arrive at the inequality $\log s=\log(2\beta-s)-2\beta a +2sa< \log (2\beta)-\beta a$,
which gives $s< 2\beta e^{-\beta a}$ and
$k=\beta-s>\beta(1-2e^{-\beta a})$. Finally,
$-E(a,\beta)=k^2>\beta^2(1-2e^{-\beta a})^2>\beta^2(1-4e^{-\beta a})$.
Together with the first inequality $k<\beta$ this gives the estimate desired.
\end{proof}

Let us complete the proof of lemma~\ref{prop1}.
Denote $a_1:=\min\big\{a_0,(10KC)^{-1}\big\}$ and pick any $a\in (0,a_1)$, and let
$\beta>3K+1+4/(3a)$.
Applying lemma~\ref{lem2} to each of the operators $L^{-,j}_{\beta,a}$
and lemma~\ref{lem3}  to each of the operators $L^{+,j}_{\beta,a}$
we arrive at the estimates
\begin{align*}
E^{-,j}(\beta,a)&>-\Big(\beta+\dfrac{\kappa^+_{M,j}}{2}\Big)^2-123 \Big(\beta+\dfrac{\kappa^+_{M,j}}{2}\Big)^2
\exp \bigg[-2a\Big(\beta+\dfrac{\kappa^+_{M,j}}{2}\Big)\bigg],\\
E^{+,j}(\beta,a)&<-\Big(\beta+\dfrac{\kappa^-_{M,j}}{2}\Big)^2 +
4\Big(\beta+\dfrac{\kappa^-_{M,j}}{2}\Big)^2
\exp \bigg[-a\Big(\beta+\dfrac{\kappa^-_{M,j}}{2}\Big)\bigg].
\end{align*}
To simplify the form of the remainders we choose $\beta_a>0$ sufficiently large such that
for $\beta>\beta_a$ we have
\[
\Big(\beta+\dfrac{K}{2}\Big)^2
\exp \bigg[-2a\Big(\beta-\dfrac{K}{2}\Big)\bigg]
+
4\Big(\beta+\dfrac{K}{2}\Big)^2
\exp \bigg[-a\Big(\beta-\dfrac{K}{2}\Big)\bigg]
\le \dfrac{1}{\beta},
\] 
then for $\beta>\beta_a+3K+1+4/(3a)$ and all $j=1,\dots,M$ we have
\[
E^{-,j}(\beta,a)>-\beta^2 -\kappa_{M,j}^+ \beta -\dfrac{K^2}{4} - \dfrac{1}{\beta},\quad
E^{+,j}(\beta,a)<-\beta^2 -\kappa^-_{M,j} \beta  + \dfrac{1}{\beta}.
\]
Using the inequality $\kappa^+_{M,j}\le\kappa_\mx$ we obtain
\begin{equation}
       \label{eq-est1}
\min_j E^{-,j}(\beta,a)> -\beta^2 -\kappa_\mx \beta -\dfrac{K^2}{4} - \dfrac{1}{\beta}.
\end{equation}
On the other hand, let $l\in\{1,\dots, M\}$ be such that $\kappa^+_{M,l}=\kappa_\mx$.
This means that there exists $s\in \overline{I^l_M}$ such that $\kappa(s)=\kappa_\mx$.
Using the Taylor expansion near $s$ we obtain
\begin{equation}
    \label{eq-kappa}
\kappa^-_{M,l}\ge \kappa^+_{M,l} - K\delta=\kappa_\mx - K\delta\equiv
\kappa_\mx - \dfrac{K\ell}{M}.
\end{equation}
In the previous considerations the number $M$ was arbitrary, and now we pick 
$M\in\big[\,\beta^\frac{1}{3},2\beta^\frac{1}{3}\big]\mathop{\cap} \NN$, then
\begin{multline}
    \label{eq-est2}
\min_j E^{+,j}(\beta,a)\le E^{+,l}(\beta,a)<-\beta^2 -\kappa^-_{M,l} \beta  + \dfrac{1}{\beta}\\
=-\beta^2 -\kappa_\mx \beta +\dfrac{K \ell}{M}\beta+ \dfrac{1}{\beta}\le -\beta^2 -\kappa_\mx \beta +K \ell \beta^\frac{2}{3} + \dfrac{1}{\beta}.
\end{multline}
Substituting the estimates \eqref{eq-est1} and \eqref{eq-est2} into \eqref{eq-eig5} we arrive at
\begin{align*}
E^+_M(\beta,a)&\le -\beta^2 -\kappa_\mx \beta +K \ell \beta^\frac{2}{3} + \dfrac{1}{\beta}
+\dfrac{4\pi^2 M^2}{\ell^2}+v \\
&= -\beta^2 -\kappa_\mx \beta +\Big(K\ell + \dfrac{16\pi^2}{\ell^2}\Big)\beta^\frac{2}{3} + v +\dfrac{1}{\beta},\\
E^-_M(\beta,a)&\ge 
-\beta^2 -\kappa_\mx \beta -\dfrac{K^2}{4} - v-4aKC-\dfrac{1}{\beta},
\end{align*}
and the assertion of lemma~\ref{prop1} follows from 
the two-side estimates \eqref{eq-eig4} .

\section{Proof of Theorem~\ref{thm1}}

We continue using the notation introduced just before theorem~\ref{thm1}.
For $a>0$ consider the maps
\[
\Phi_{k,a}:(0,\ell_k)\times\RR\to \RR^2,\quad
\Phi_{k,a}(s,u)=\begin{pmatrix}
\Gamma_{k,1}(s)-u\Gamma'_{k,2}(s)\\
\Gamma_{k,2}(s)+u\Gamma'_{k,1}(s)
\end{pmatrix}, \quad k=1,\dots, n.
\]
As in section~\ref{sec2}, we can find $a_0>0$ such that for any $a\in(0,a_0)$
these maps are diffeomorphic between $\square_{k,a}:=(0,\ell_k)\times(0,a)$ and $\Omega_{k,a}:=\Phi_{k,a}(\square_{k,a})$,
that $\Omega_{k,a}\subset \Omega$, and that $\Omega_{j,a}\cap \Omega_{k,a}=\emptyset$ for $j\ne k$.
Note that the last property follows from the fact that the opening angles of the boundary corners (if any)
are reflex. In addition, we set $\Omega_{0,a}:=\Omega \setminus \Big(\bigcup_{k=1}^n \overline{\Omega_{k,a}}\Big)$.
Denote $\Tilde H^1_0(\Omega_{k,a}):=\big\{
f\in H^1(\Omega_{k,a}): \, f\uhr_{\partial \Omega_{k,a}\setminus{\Bar\Sigma_k}}=0
\big\}$, $k=1,\dots,n$, and introduce two new sesquilinear forms $h^{N/D,a}_\beta$ in $L^2(\Omega)$,
both defined by the same expression as $h_\beta$ on the domains
\[
\dom h^{N,a}_\beta=\bigoplus_{k=0}^n H^1(\Omega_{k,a}),
\quad
\dom h^{D,a}_\beta=H^1_0(\Omega_{0,a})\mathop{\cup}\bigg(\bigoplus_{k=1}^n \Tilde H^1_0(\Omega_{k,a})\bigg),
\]
and define
\[
E_{N/D}(\beta,a):=\inf_{0\ne f\in \dom h^{N/D,a}_\beta}\dfrac{h^{N/D,a}_\beta(f,f)}{\|f\|^2_{L^2(\Omega)}}.
\]
Due to the inclusions $\dom h^{D,a}_\beta\subset\dom h_\beta\subset \dom h^{N,a}_\beta$ we have the inequalities
\begin{equation}
      \label{eq-ts}
E_{N}(\beta,a)\le E(\beta)\le E_{D}(\beta,a).
\end{equation}
 Furthermore, due to the fact that the parts $\Omega_{k,a}$ are disjoint
 and that the set $\Sigma\mathop{\cap}\partial\Omega_{0,a}$ is finite (this is exactly
 the set of the corners),
we have the equality
$E_{N/D}(\beta,a)=\min_{k\in\{0,\dots,n\}} E_{k,N/D}(\beta,a)$,
with
\begin{align*}
E_{0,N}(\beta,a)&:=\inf_{0\ne f\in H^1(\Omega_{0,a})}\dfrac{\|\nabla f\|^2_{L^2(\Omega_{0,a})}}{\|f\|^2_{L^2(\Omega_{0,a})}},\\
E_{k,N}(\beta,a)&:=
\inf_{0\ne f\in H^1(\Omega_{k,a})}\dfrac{\|\nabla f\|^2_{L^2(\Omega_{k,a})}-\beta\|f\|^2_{L^2(\Sigma_k)}}{\|f\|^2_{L^2(\Omega_{k,a})}},
\quad k=1,\dots,n,\\
E_{0,D}(\beta,a)&=\inf_{0\ne f\in H^1_0(\Omega_{0,a})}\dfrac{\|\nabla f\|^2_{L^2(\Omega_{0,a})}}{\|f\|^2_{L^2(\Omega_{0,a})}},\\
E_{k,D}(\beta,a)&:=
\inf_{0\ne f\in \Tilde H^1_0(\Omega_{k,a})}\dfrac{\|\nabla f\|^2_{L^2(\Omega_{k,a})}-\beta\|f\|^2_{L^2(\Sigma_k)}}{\|f\|^2_{L^2(\Omega_{k,a})}},
\quad k=1,\dots,n.
\end{align*}
We have clearly $E_{0,N/D}(\beta,a)\ge 0$.
Furthermore, in virtue of lemma~\ref{prop1} we can find $a>0$ such that
for each $k\in\{1,\dots,n\}$ for $\beta\to+\infty$ we have
\[
E_{k,N/D}(\beta,a)=-\beta^2-\gamma_{k,\mx}\beta+O\big(\beta^\frac{2}{3}\big), \quad \gamma_{k,\mx}:=\max_{s\in[0,\ell_k]}\gamma_k(s),
\]
which gives $E_{N/D}(\beta,a)=-\beta^2-\gamma_\mx\beta+O\big(\beta^\frac{2}{3}\big)$, and the assertion
of theorem~\ref{thm1} follows from the two-side estimate \eqref{eq-ts}.

\begin{remark}
A more detailed asymptotic analysis is beyond the scope of the present note, but we mention one case
in which the remainder estimate can be slightly improved with minimal efforts. Namely,
assume that one of the following conditions is satisfied:
\begin{itemize}
\item the boundary $\Sigma$ is of class $C^4$ (i.e. there are no corners),
\item the curvature does not attain its maximal value $\gamma_\mx$ at the corners,
\end{itemize}
then
\begin{equation}
   \label{eq-impr}
E(\beta)=-\beta^2-\gamma_\mx\beta +O\big(\sqrt\beta\,\big) \text{ as } \beta\to+\infty.
\end{equation}
Indeed, let us pick any $k\in\{1,\dots, n\}$ such that $\gamma_{k,\mx}=\gamma_\mx$
and revise the proof of lemma~\ref{prop1} with $\Gamma:=\Gamma_k$, $\kappa:=\gamma_k$ and $\ell:=\ell_k$.
For any $s\in[0,\ell]$ with $\kappa(s)=\kappa_\mx$ we have then $\kappa'(s)=0$,
and we may replace the inequality \eqref{eq-kappa} with
\[
\kappa^-_{M,l}\ge \kappa^+_{M,l} - K\delta^2=\kappa_\mx - K\delta^2\equiv
\kappa_\mx - \frac{K\ell^2}{M^2},
\]
and by choosing $M\in\big[\sqrt[4]{\mathstrut \beta}, \, 2\sqrt[4]{\mathstrut\beta}\,\big]\mathop{\cap}\NN$ we arrive
at the estimate $E_{N/D}(\beta,a)=-\beta^2-\kappa_\mx \beta +O(\sqrt{\mathstrut\beta}\,)$ as $\beta\to+\infty$, which in turn
gives the asymptotics \eqref{eq-impr}.

\end{remark}

\section{Acknowledgments}
The research was partially supported by ANR NOSEVOL and GDR Dynamique quantique.

\end{document}